\documentclass{article}
\usepackage{amsmath}
\usepackage{amsthm}
\usepackage{amssymb}
\usepackage{mathrsfs}
\usepackage{relsize}
\theoremstyle{definition}
\newtheorem{definition}{Definition}[section]
\theoremstyle{plain}
\newtheorem{theorem}[definition]{Theorem}
\newtheorem{corollary}[definition]{Corollary}
\newtheorem{lemma}[definition]{Lemma}
\usepackage{biblatex}
\addbibresource{bmsproof.bib}

\title{Well-Orderedness of the Bashicu Matrix System}
\author{Rachel Hunter}

\begin{document}
\maketitle
\begin{abstract}
The Bashicu Matrix System is a recursive system of ordinal notations created by the user BashicuHyudora of the japanese Googology Wiki. In this paper, we prove that the Bashicu Matrix System is well-ordered.
\end{abstract}

\section{Introduction}
The Bashicu Matrix System ($BMS$) is a recursive system of ordinal notations with a large order type created by the user BashicuHyudora of the japanese Googology Wiki \cite{bashicu}. Originally, it was defined informally in pseudocode based on the programming language BASIC, and the following is the agreed-upon formalization:

\begin{definition}
\label{bmsdef}
An array is a sequence of equal-length sequences of natural numbers, i.e. an element of ${(\mathbb{N}^n)}^m$ for some $n,m\in\mathbb{N}$. For every array $A\in{(\mathbb{N}^n)}^m$, the columns of $A$ are its elements, and for each $n'<n$, the $n'$-th row of $A$ is the sequence of length $m$ such that for each $m'<m$, the $m'$-th element of the $n'$-th row is the $n'$-th element of the $m'$-th column. We will denote concatenation of sequences by $+$.

Let $A$ be any array and $n$ be any natural number. For every $m$ smaller than the length of $A$'s columns and every $i$ smaller than the length of $A$, the $m$-parent of the $i$-th column is the last column before it whose $m$-th element is smaller than the $m$-th element of the $i$-th column, and which is an $(m-1)$-ancestor of the $i$-th column if $m>0$, if such a column exists. If no such column exists, then the $i$-th column does not have an $m$-parent. The $m$-ancestors (also called strict $m$-ancestors) of a column are its $m$-parent and the $m$-ancestors of its parent. The non-strict $m$-ancestors of a column are the column itself and its $m$-ancestors.

If $A$ is empty, then the expansion of $A$ at $n$ is $A[n]=A$. Otherwise let $C$ be the last element of $A$ and let $m_0$ be maximal such that $C$ has an $m_0$-parent, if such an $m_0$ exists, otherwise $m_0$ is undefined. Let arrays $G,B_0,B_1,...,B_n$ be such that:
\vspace{-8pt}
\begin{itemize}
    \setlength\itemsep{-4pt}
    \item $A=G+B_0+(C)$.
    \item The first element of $B_0$ is the $m_0$-parent of $C$ if $m_0$ is defined and otherwise $B_0$ is empty.
    \item For each $D$ in $B_0$ and $m<m_0$, if the first column in $B_0$ is $D$ or an $m$-ancestor of $D$, then it the $m$-th element of $D$ is said to ascend.
    \item $B_i$ is a copy of $B_0$, but for each ascending element of each column in $B_0$, its copy in $B_i$ is increased by $i\cdot((m$-th element of $C)-(m$-th element of the first column in $B_0))$, where $m$ is the index of the row in which that element is.
\end{itemize}
\vspace{-8pt}
Then the expansion $A[n]$ of $A$ at $n$ is $G+B_0+B_1+...+B_n$, with all rows of zeroes at the bottom removed.

$BMS$ is the closure of $\{((\underbrace{0,0,...,0,0}_n),(\underbrace{1,1,...,1,1}_n)) : n\in\mathbb{N}\}$ under expansion at each natural number, ordered by the $\subseteq$-minimal partial order such that $A[n]\le A$ for each $n\in\mathbb{N}$ and $A\in BMS$. Here, a partial order $\le$ is the set of pairs $(x,y)$ such that $x\le y$.
\end{definition}
\vspace{3pt}

This is the fourth official version of the system, which is why it is also referred to as $BM4$. The previous versions $BM1$, $BM2$ and $BM3$ were not well-founded, but as we prove below, $BM4$ is well-founded. There are also unofficial versions, of which $BM2.3$ is strongly believed to be equivalent to $BM4$ \cite{koteitan}, and $BM3.3$ is also notable for its similarity to $BM4$ and temporarily more predictable behavior. However, they are not the focus of this paper, so from now on, we will only refer to $BM4$.

The question of whether $BMS$ is well-ordered has been an open problem for almost 8 years, and it was among the most significant open problems in googology. Although $BMS$ is yet to be used outside of this field, its simplicity and large order type provide hope for future uses in proof theory and model theory. Before this paper, the research about $BMS$ has brought the following results:
\vspace{-8pt}
\begin{itemize}
    \setlength\itemsep{-4pt}
    \item $BMS$ restricted to arrays with one row is also called the Primitive Sequence system (or $PrSS$), and has a simple isomorphism with the iterated base-$\omega$ Cantor normal form - intuitively, each column represents a single $\omega$ in the string, the element of the column is the "height" of the $\omega$ (the number of exponents it appears in), and distinct $\omega$s with the same height are separated by a $+$ at the same level, unless there is an $\omega$ between them with a lower height. $\omega$s that do not have any $\omega$ in their exponent in the resulting string are exponentiated to $0$. This isomorphism can be proven easily by transfinite induction on the Cantor normal form expression, thus the order type of $PrSS$ is $\varepsilon_0$.
    \item $BMS$ restricted to arrays with two rows is also called the Pair Sequence System (or $PSS$), and was proven well-founded in 2018,\cite{pbot} with its order type shown to be the proof-theoretic ordinal of $\Pi^1_1-CA_0$, i.e. the countable collapse of $\omega_\omega$ using standard collapsing functions (such as Buchholz's function in this case).
\end{itemize}
\vspace{-8pt}

If we abbreviate $\langle L_\alpha,\in\rangle\prec_{\Sigma_1}\langle L_\beta,\in\rangle$ as $\alpha<_0\beta$, then informal estimates say that the order type of the set of arrays in $BMS$ smaller than $((0,0,0),(1,1,1),\linebreak[0](2,2,2))$ is most likely the supremum of, for each $n$, a recursive collapse (using standard collapsing functions) of the smallest ordinal $\alpha_0$ for which there exist $\alpha_1,\alpha_2,...,\alpha_n$ such that $\alpha_0<_0\alpha_1<_0\alpha_2<_0...<_0\alpha_n$.

The order type of the entirety of $BMS$ has not been carefully estimated in terms of ordinal functions yet, but is expected to be the supremum of, for each $n$, a recursive collapse of the smallest ordinal $\alpha$ for which there exists $\beta$ with $\langle L_\alpha,\in\rangle\prec_{\Sigma_n}\langle L_\beta,\in\rangle$, using collapsing functions that may be standard in the future.

Subjectively, $BMS$ is a very elegant way to represent large recursive ordinals. With enough formalization effort, it could give rise to a system of recursively large ordinals. This system would be similar to stability in structure and, as far as we know, similar to stability in scale too, but perhaps easier to understand or easier to use for some purposes such as ordinal analysis.

We utilize this similarity to prove that $BMS$ is well-ordered. Specifically, we first prove that $BMS$ is totally ordered and the order is precisely the lexicographical order. We then prove that a certain reflection property holds for stable ordinals. We show that this property allows us to map elements of $BMS$ to ordinals while preserving the order. Using this order-preserving function from $BMS$ to $Ord$, any infinite descending sequence in $BMS$ would be mapped to an infinite descending sequence in $Ord$, which cannot exist by definition, thus $BMS$ is well-ordered.

\section{The Proof}
Given that a property holds for every element of a set $X$, and that if it holds for $x$ then it holds for $f(x)$ for each $f$ in some set $F$ of functions, it is easy to see from the definition of closure that the property holds for all elements of the closure of $X$ under the functions $f\in F$. We consider this fact trivial enough to be used implicitly.

It is clear that for $A,A'\in BMS$, $A'<A$ iff $A$ is non-empty and $A'=A[n_0][n_1]...[n_m]$ for some $m,n_0,n_1,...,n_m\in\mathbb{N}$.

\begin{lemma}
For all $A\in BMS$ and $n\in\mathbb{N}$, $A[n]$ is lexicographically smaller than $A$ (with the columns also compared lexicographically).
\end{lemma}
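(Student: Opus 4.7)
The plan is to locate the first column position at which $A[n]$ and $A$ disagree (viewing each array as a sequence of columns, and each column as a sequence of natural numbers) and to show that $A[n]$'s column is lex smaller there. Writing $A = G + B_0 + (C)$ and $A[n] = G + B_0 + B_1 + \ldots + B_n$ modulo the trimming of trailing zero rows, the first $|G| + |B_0|$ columns coincide in the two arrays, so the first possible disagreement is at column index $|G| + |B_0|$.

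The easy subcases are those in which $A[n]$ is a proper column-prefix of $A$. If $B_0$ is empty (no $m_0$-parent exists) then $A[n] = G$, and if $n = 0$ then $A[n] = G + B_0$; each is a proper prefix of $A = G + B_0 + (C)$ and hence lex smaller. The boundary case $A = ()$ fails the strict inequality, but by the preceding paragraph of the paper, strict inequalities in $BMS$ only concern non-empty $A$, so this case is implicitly excluded.

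For the principal case ($B_0$ non-empty and $n \geq 1$), the column of $A$ at position $|G| + |B_0|$ is $C$, while the column of $A[n]$ there is the first column of $B_1$, obtained from $D_0 := $ the $m_0$-parent of $C$ by raising each ascending entry at row $m$ by $1 \cdot (C_m - (D_0)_m)$. Since $D_0$ is trivially a non-strict $m$-ancestor of itself, its $m$-th entry ascends for every $m < m_0$ and therefore takes the value $(D_0)_m + (C_m - (D_0)_m) = C_m$ in $B_1$'s first column; meanwhile the $m_0$-th entry of $D_0$ does not ascend and remains $(D_0)_{m_0}$, which by the defining property of the $m_0$-parent satisfies $(D_0)_{m_0} < C_{m_0}$. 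Hence the first column of $B_1$ matches $C$ in rows $0, 1, \ldots, m_0 - 1$ and is strictly smaller in row $m_0$; so it is lex smaller than $C$, and $A[n]$ is lex smaller than $A$.

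The main subtlety I expect lies in the interaction with the removal of trailing zero rows, which could in principle shift the point of first disagreement or delete the decisive row $m_0$. Trimming, however, strips zeros uniformly from the bottom of every column, so any two arrays differing only in such rows represent the same sequence of columns once columns are identified with their zero-padding to a common length, and this identification preserves the lex order. Even in the extreme case where trimming deletes row $m_0$ from $A[n]$, the $m_0$-th entry of $B_1$'s first column was already $0$, while $C_{m_0} > 0$ is untouched in $A$; so the decisive inequality at row $m_0$ still certifies $A[n]$ lex smaller than $A$.
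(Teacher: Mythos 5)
Your proof is correct and takes essentially the same route as the paper's: reduce the comparison to the column at position $|G|+|B_0|$, i.e.\ the first column of $B_1$ versus $C$, and observe that ascension makes its entries in rows $m<m_0$ equal to $C$'s while the entry in row $m_0$ remains that of the $m_0$-parent, hence strictly smaller. Your explicit handling of the trailing-zero-row trimming and of the empty-array edge case is extra care that the paper's proof leaves implicit, but the core argument is identical.
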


\begin{proof}
Using the variable names from the definition of $BMS$, we have $A=G+B_0+(C)$ and $A[n]=G+B_0+B_1+...+B_n$. Then $A[n]<_{lex}A$ iff $B_1+B_2+...+B_n<_{lex}(C)$, which is trivial if $m_0$ is undefined (the empty sequence is lexicographically smaller than all other sequences, including $(C)$), and otherwise equivalent to the first column in $B_1$ being lexicographically smaller than $C$.

Let $R_i$ be the first column in $B_i$. Since $R_0$ is the $m_0$-parent of $C$, it is an $m$-ancestor of $C$ for each $m\le m_0$, thus the $m$-th element of $R_0$ is less than the $m$-th element of $C$. By definition, $R_1$ is a copy of $R_0$, but for each $m<m_0$, the $m$-th element is increased either by 0 or by the difference between itself and the $m$-th element of $C$. Then it is less than or equal to the $m$-th element of $C$, so the sequence of the first $m_0$ elements of $R_1$ is pointwise smaller than or equal to the sequence of the first $m_0$ elements of $C$ (in fact, it is equal, but that is not necessary for this proof). However, the $m_0$-th element of $R_1$ is necessarily equal to the $m_0$-th element of $R_0$ since $m_0<m_0$ is false, thus it is strictly smaller than the $m_0$-th element of $C$.

Therefore $R_1<_{lex}C$, which implies $B_1+B_2+...+B_n<_{lex}(C)$, and thus $A[n]<_{lex}A$.
\end{proof}

\begin{corollary}
For all $A,A'\in BMS$, $A'<A$ implies $A'<_{lex}A$.
\end{corollary}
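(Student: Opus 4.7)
The plan is to combine the preceding Lemma with the characterization of the order stated just before it. Recall the observation that $A'<A$ holds precisely when $A$ is non-empty and $A'=A[n_0][n_1]\ldots[n_m]$ for some $m\in\mathbb{N}$ and $n_0,n_1,\ldots,n_m\in\mathbb{N}$. I would proceed by induction on $m$, viewing $m+1$ as the number of expansion steps that produce $A'$ from $A$.

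For the base case $m=0$, the relation $A'=A[n_0]$ gives $A'<_{lex}A$ immediately by the Lemma. For the inductive step, assume the implication holds whenever $A'$ arises from $A$ in at most $m+1$ expansion steps, and consider $A'=A[n_0]\ldots[n_m][n_{m+1}]$. Set $A''=A[n_0]\ldots[n_m]$. The induction hypothesis yields $A''<_{lex}A$, and the Lemma applied to $A''$ gives $A'=A''[n_{m+1}]<_{lex}A''$. Transitivity of $<_{lex}$ then delivers $A'<_{lex}A$.

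The only subtle point is transitivity of the lexicographic order used here, where both arrays and their columns are compared lexicographically (and columns themselves may have differing heights after trailing rows of zeros are stripped). This is routine: a lexicographic order built on top of a linear order is itself a linear order, hence transitive. I do not foresee any deeper obstacle; the corollary is essentially the transitive closure of the Lemma.
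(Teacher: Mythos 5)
Your proof is correct and is exactly the argument the paper intends: the corollary is stated without proof precisely because it follows from the characterization $A'<A$ iff $A'=A[n_0][n_1]\ldots[n_m]$, the lemma $A[n]<_{lex}A$, and transitivity of the lexicographic order, via the induction you describe. Your remark on transitivity for columns of differing heights (prefix-smaller convention) is a fair point of care, but it poses no obstacle since lexicographic order over a linear order remains linear.
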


\begin{lemma}
\label{totality}
$BMS$ is totally ordered.
\end{lemma}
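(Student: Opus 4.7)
The preceding corollary shows that $<$ refines $<_{lex}$ on $BMS$; my plan is to prove the converse, that for $A, A' \in BMS$ with $A' <_{lex} A$ we have $A' < A$. Together with the corollary this gives $< = <_{lex}$ on $BMS$, and hence totality, since $<_{lex}$ is itself a total order on arrays.

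The key technical ingredient is a \emph{cofinality lemma}: for every non-empty $A \in BMS$ and every $A' \in BMS$ with $A' <_{lex} A$, there exists $n \in \mathbb{N}$ with $A' \le_{lex} A[n]$. Combined with the earlier lemma $A[n] <_{lex} A$, this says that the fundamental sequence $A[0], A[1], A[2], \ldots$ is lex-cofinal in $\{B \in BMS : B <_{lex} A\}$. To establish it, I would write $A = G + B_0 + (C)$ as in Definition \ref{bmsdef}, locate the first column at which $A$ disagrees with $A'$, and analyze the scaled copies $B_1, \ldots, B_n$: as $n$ grows, $A[n]$ agrees with $A$ on longer and longer prefixes (thanks to the repeated $G + B_0$ block at the start), while the linear increase of the ascending entries eventually dominates any fixed column of $A'$ at the disagreement point. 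A case split --- on whether $A'$ deviates from $A$ inside $G$, inside $B_0$, or at or past $C$, and on the height of $A'$ --- together with the effect of removing trailing zero rows, completes the argument.

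With the cofinality lemma secured, I would deduce $A' <_{lex} A \Rightarrow A' < A$ by reducing to the totality of the downward set of each initial element. Concretely, I aim to show that for every $k \in \mathbb{N}$ the set $\{B \in BMS : B \le ((0,\ldots,0)_k, (1,\ldots,1)_k)\}$ is totally ordered, by induction on $k$, using the calculation $((0,\ldots,0)_{k+1}, (1,\ldots,1)_{k+1})[1] = ((0,\ldots,0)_k, (1,\ldots,1)_k)$ (after removing the trailing zero row) to embed the previous case into the next. Inside each such downward set, the comparability of two given elements is settled by repeatedly applying the cofinality lemma to descend along the fundamental sequence of the larger one, and by showing, via a structural argument on the expansion rule, that the descent actually reaches the smaller element in finitely many steps. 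Since any two elements of $BMS$ lie below a sufficiently large initial element, this yields totality globally.

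The principal obstacle is the cofinality lemma itself. Its proof must unwind the intricate definition of expansion --- ancestors, ascending rows, the scaled copies $B_1, \ldots, B_n$, and the deletion of trailing zero rows --- and must handle cases in which $A$ and $A'$ have different heights, or in which $A'$ has a much longer column at the point of disagreement. A secondary subtlety is the termination of the descent along fundamental sequences: it has to be argued by the combinatorial structure of the expansion rule, not by well-foundedness of $BMS$, since well-foundedness is precisely what the larger proof aims to establish only after the present totality lemma is in place.
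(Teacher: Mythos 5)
Your high-level plan (prove the converse implication $A'<_{lex}A\Rightarrow A'<A$, so that $<$ coincides with $<_{lex}$ and totality follows) would indeed suffice, but both pillars it rests on have genuine gaps, and the first one fails as sketched. The cofinality lemma cannot be proven by the lex/growth analysis you describe, because that analysis never uses $A'\in BMS$ in an essential way, and the statement is false at the level of arbitrary arrays. Concretely, take $A=((0,0),(1,1),(2,2))\in BMS$ (it equals $((0,0,0),(1,1,1))[2]$). Its expansions are $A[n]=((0,0),(1,1),(2,1),(3,1),\ldots,(n+1,1))$: only the row-$0$ entries of the appended copies ascend. Now consider the array $A''=((0,0),(1,1),(2,1),(3,2))$. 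It satisfies $A''<_{lex}A$ (disagreement at the third column, $(2,1)<_{lex}(2,2)$), yet $A''>_{lex}A[n]$ for every $n$, since $A''$ agrees with $A[n]$ on the first three columns and beats $(3,1)$ with $(3,2)$ at the fourth. Every hypothesis your case analysis inspects (location of the first disagreement, heights, growth of ascending entries) is satisfied by $A''$, so your argument would "prove" a false statement; the lemma survives only because $A''\notin BMS$, a fact which is itself nontrivial and, in the paper's development, is available only as a consequence of the final theorem. Two structural claims in your sketch are also incorrect: $A[n]$ agrees with $A$ exactly on the fixed prefix $G+B_0$ for every $n$ (the agreement does not lengthen as $n$ grows), and the column of $A[n]$ at any fixed position is independent of $n$ (increasing $n$ only appends columns), so "the linear increase eventually dominates any fixed column of $A'$" has no force.

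The second gap is the termination of your descent along fundamental sequences. Lexicographic order is not well-founded on arrays (e.g. $(1)>_{lex}(0,1)>_{lex}(0,0,1)>_{lex}\cdots$), well-foundedness of $BMS$ is exactly what is not yet available, and you propose no decreasing measure; so this is not a secondary subtlety but a difficulty of the same order as the cofinality lemma. For contrast, the paper's proof of this lemma avoids $<_{lex}$ entirely: it observes that $A[0]$ is $A$ with its last column removed, so every $A'\in BMS$ that is a prefix of $A$ equals $A[0][0]\cdots[0]$ and hence satisfies $A'\le A$, and that $A[n]$ is a prefix of $A[n+1]$; from this it deduces that closing a totally ordered subset of $BMS$ under one round of expansions preserves totality, and concludes by induction on the stages of the closure starting from the chain $X_0$. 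The coincidence of $<$ with $<_{lex}$ is then obtained as a corollary of totality rather than used as a route to it.
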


\begin{proof}
For every non-empty $A\in BMS$, $A[0]$ is simply $A$ without the last column, as it is equal to $G+B_0$, and thus $A=A[0]+(C)$. Then it is trivial to prove by induction that for all $A,A'\in BMS$, if $A'$ is a subsequence of $A$, then $A'=A\underbrace{[0][0]...[0][0]}_n$ for some $n\in\mathbb{N}$, and thus $A'\le A$. Together with $A[n]$ being a subsequence of $A[n+1]$ for all $A\in BMS$ and $n\in\mathbb{N}$, this also means that for all $A,A'\in BMS$ and $n\in\mathbb{N}$, $A[n]\le A[n+1]$, and if $A[n]<A'\le A[n+1]$, then $A[n]\le A'[0]$. This implies that if some subset $X$ of $BMS$ is totally ordered, then $X\cup\{A[n] : A\in X\land n\in\mathbb{N}\}$ is also totally ordered. By induction, it is clear that if $X\subseteq BMS$ is totally ordered, then $X\cup\{A[n_0] : A\in X\land n_0\in\mathbb{N}\}\cup\{A[n_0][n_1] : A\in X\land n_0,n_1\in\mathbb{N}\}\cup...\cup\{A[n_0][n_1]...[n_m] : A\in X\land n_0,n_1,...,n_m\in\mathbb{N}\}$ is totally ordered for each $m\in\mathbb{N}$. Let $X_0=\{((\underbrace{0,0,...,0,0}_n),(\underbrace{1,1,...,1,1}_n)) : n\in\mathbb{N}\}$. Since each $A\in BMS$ is in $\{A''[n_0][n_1]...[n_m] : A''\in X_0\land n_0,n_1,...,n_m\in\mathbb{N}\}$ for some $m\in\mathbb{N}$, it is obvious that if $X_0$ is totally ordered, then for all $A,A'\in BMS$, there's some $m\in\mathbb{N}$ such that $A,A'\in X_0\cup\{A''[n_0] : A''\in X_0\land n_0\in\mathbb{N}\}\cup\{A''[n_0][n_1] : A''\in X_0\land n_0,n_1\in\mathbb{N}\}\cup...\cup\{A''[n_0][n_1]...[n_m] : A''\in X_0\land n_0,n_1,...,n_m\in\mathbb{N}\}$, which is totally ordered, and therefore $A,A'$ are comparable. So if $X_0$ is totally ordered, then $BMS$ is totally ordered.

It is now sufficient to prove that $X_0$ is totally ordered. This is easy, since $((\underbrace{0,0,...,0,0}_{n+1}),(\underbrace{1,1,...,1,1}_{n+1}))[1]$ is trivially $((\underbrace{0,0,...,0,0}_n),(\underbrace{1,1,...,1,1}_n))$, and thus by induction, for each $n<m\in\mathbb{N}$, $((\underbrace{0,0,...,0,0}_n),(\underbrace{1,1,...,1,1}_n))=((\underbrace{0,0,...,0,0}_m),\linebreak(\underbrace{1,1,...,1,1}_m))\underbrace{[1][1]...[1][1]}_{m-n}<((\underbrace{0,0,...,0,0}_m),(\underbrace{1,1,...,1,1}_m))$, and all elements of $X_0$ are of this form, so all elements of $X_0$ are pairwise comparable.
\end{proof}

\begin{corollary}
The ordering of $BMS$ coincides with the lexicographical ordering with columns compared lexicographically.
\end{corollary}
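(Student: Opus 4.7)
The plan is to deduce the corollary from the preceding results by a short trichotomy argument, without doing any new work on the expansion operation itself. We already have one direction: the previous corollary states that $A' < A$ implies $A' <_{lex} A$. What remains is the converse, namely that $A' <_{lex} A$ implies $A' < A$, and this will follow purely from the fact, established in Lemma \ref{totality}, that $<$ is a total order on $BMS$.

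More concretely, I will fix $A, A' \in BMS$ with $A' <_{lex} A$ and argue by trichotomy on $<$. Since $<$ totally orders $BMS$, exactly one of $A' < A$, $A' = A$, or $A < A'$ holds. The middle case is impossible because $A' <_{lex} A$ already rules out $A' = A$ (the lexicographic order is strict). In the third case, the previous corollary applied to $A < A'$ would give $A <_{lex} A'$, directly contradicting $A' <_{lex} A$ since $<_{lex}$ is itself a strict order on finite sequences of columns. Hence only $A' < A$ remains, which is exactly what we want.

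Combining this with the previous corollary gives $A' < A \iff A' <_{lex} A$, so the two orderings coincide on $BMS$. There is no real obstacle here; the argument is essentially a formal consequence of totality plus one direction of the lexicographic comparison. The only thing worth double-checking is that the ``columns compared lexicographically'' convention is the same in the statement and in the earlier lemma, but since both arise from the same definition of $<_{lex}$ used throughout, this is immediate.
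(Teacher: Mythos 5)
Your proposal is correct and is exactly the argument the paper intends: the corollary is stated without proof precisely because it follows immediately from totality (Lemma \ref{totality}) plus the one-directional corollary via the trichotomy argument you give, using only irreflexivity and asymmetry of $<_{lex}$. Nothing further is needed.
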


\begin{lemma}
\label{bpiso}
Let $A$ be a non-empty array and $n$ be a natural number, let $G,B_0,B_1,...,B_n,m_0$ be as in Definition \ref{bmsdef}, and let $l_0,l_1$ be the lengths of $G,B_0$.\newline(i) For all $i<l_0$, $j<l_1$ and $k\in\mathbb{N}$, in $A[n]$, the $i$-th column in $G$ is a $k$-ancestor of the $j$-th column in $B_0$ iff it is a $k$-ancestor of the $j$-th column in $B_n$.\newline(ii) For all $i,j<l_1$ and $k\in\mathbb{N}$, the $i$-th column in $B_0$ is a $k$-ancestor of the $j$-th column in $B_0$ iff the $i$-th column in $B_n$ is a $k$-ancestor of the $j$-th column in $B_n$.\newline(iii) If $n>0$, then for all $i<l_1$ and $k<m_0$, in $A$, the $i$-th column in $B_0$ is a $k$-ancestor of the last column of $A$ iff in $A[n]$, the $i$-th column in $B_{n-1}$ is a $k$-ancestor of the first column in $B_n$.\newline(iv) For all $0<i<l_1$ and $k\in\mathbb{N}$, in $A[n]$, the $k$-parent of the $i$-th column in $B_n$ is either in $B_n$ or in $G$.\newline(v) For all $i,j<l_1$ and $k\in\mathbb{N}$ and $n_0<n_1<n$, in $A[n]$, the $i$-th column in $B_{n_0}$ is a $k$-ancestor of the $j$-th column in $B_{n_1}$ iff it's a $k$-ancestor of the $j$-th column in $B_{n_1+1}$.
\end{lemma}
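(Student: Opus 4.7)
The plan is to prove all five parts by a simultaneous induction on $k$, preceded by structural bookkeeping about how each $B_i$ relates to $B_0$. Let $R_i$ denote the first column of $B_i$. Since $R_0$ trivially satisfies the ascension condition at every row $m<m_0$, we have $R_i^m=R_0^m+i(C_m-R_0^m)$ for $m<m_0$, and $R_i^m=R_0^m$ for $m\ge m_0$. More generally, a position $(j,m)$ of $B_i$ is ascending precisely when $m<m_0$ and $R_0$ is a non-strict $m$-ancestor of $(B_0)_j$ inside $B_0$, and then $(B_i)_j^m=(B_0)_j^m+i(C_m-R_0^m)$; all other entries are copied verbatim across the blocks. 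Finally, note that $R_0$ is a $k$-ancestor of $C$ in $A$ for every $k<m_0$ (inherited from being the $m_0$-parent of $C$), so $R_0^k<C^k$ whenever $k<m_0$.

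The structural heart of the lemma is a \emph{shadowing} claim, from which parts (iv) and (v) fall out: for any $n_0<n_1\le n$ and $0<j<l_1$, the $k$-parent of the $j$-th column of $B_{n_1}$ in $A[n]$ cannot lie in $B_{n_0}$. On rows $k\ge m_0$ this is immediate, since $B_{n_0}$ and $B_{n_1}$ agree verbatim, so any candidate in $B_{n_0}$ has a strictly-later verbatim copy in $B_{n_1}$; and $(k-1)$-ancestor validity transfers by the inductive hypothesis of (ii) at level $k-1$. For $k<m_0$ the same argument works because the shifts $i(C_k-R_0^k)$ are nondecreasing in $i$, so the later copy's row-$k$ entry remains below the target's.

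Given shadowing, parts (i) and (ii) reduce to checking that the natural bijection $(B_0)_j\leftrightarrow(B_n)_j$ respects $k$-parenthood: candidate $k$-parents correspond exactly, since rows $k\ge m_0$ are unchanged and, for $k<m_0$, the relevant strict inequalities are preserved under the uniform shift within matched ascending status, with $(k-1)$-ancestor validity supplied by induction. Part (v) then follows by the same argument with $B_{n_0}$ playing the role of $B_0$ and $B_{n_1},B_{n_1+1}$ playing the role of $B_0,B_n$.

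The main obstacle is part (iii). The central identity is that, for $k<m_0$ and any ascending $(B_0)_i$,
\[
R_n^k-(B_{n-1})_i^k=C_k-(B_0)_i^k,
\]
so the strict-inequality test for $(B_{n-1})_i$ being a $k$-parent of $R_n$ matches that for $(B_0)_i$ being a $k$-parent of $C$ exactly. The subtle step is to rule out two failure modes: $R_n$'s $k$-ancestor chain wandering into some $B_{n'}$ with $n'<n-1$, and non-ascending columns of $B_{n-1}$ serving as $k$-ancestors of $R_n$ without counterparts in $A$. For the former, I would adapt the shadowing argument to the first column of $B_n$ using the same shift computation. For the latter, I would use the inductive observation that any valid $k$-parent, via its forced $(k-1)$-ancestor chain, must trace back through $R_0$ and hence be ascending at row $k$. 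Combined with the identity above, these refinements give the bijection of $k$-ancestor chains asserted by (iii).
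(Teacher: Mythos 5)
Your overall skeleton (simultaneous induction on $k$, the shift bookkeeping, and the identity $R_n^k-(B_{n-1})_i^k=C_k-(B_0)_i^k$ for ascending columns) matches the paper's proof, but the step you call the structural heart --- the shadowing claim, which is essentially part (iv) --- is not actually proven, and the argument you give for it fails. First, the inequality transfer for $k<m_0$ is backwards: if $D=(B_{n_0})_p$ and the target is $T=(B_{n_1})_j$ with both positions ascending, then $D^k<T^k$ amounts to $(B_0)_p^k<(B_0)_j^k+(n_1-n_0)s$ with $s=C_k-R_0^k>0$, whereas for the copy $D'=(B_{n_1})_p$ one needs $(B_0)_p^k<(B_0)_j^k$; taking $(B_0)_p^k=(B_0)_j^k$ makes $D^k<T^k$ true but $D'^k=T^k$, so the ``later verbatim copy'' need not remain below the target --- the larger shift on the copy works against you, not for you. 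Second, the copy $D'$ sits at the same within-block position $p$ as $D$, and nothing forces $p<j$; when $p\ge j$ the copy does not even precede $T$ and cannot serve as a competing parent candidate. Third, the transfer of ``$D$ is a $(k-1)$-ancestor of $T$'' to ``$D'$ is a $(k-1)$-ancestor of $T$'' is not an instance of (ii) at level $k-1$: (ii) relates ancestry internal to $B_0$ with ancestry internal to $B_n$, while your hypothesis is a cross-block relation. Ruling out cross-block parents genuinely requires the harder argument the paper gives for (iv): prove (ii) and (iii) at level $k$ first, then show that a $k$-parent lying in an earlier block $B_{n'}$ would force the first column of $B_{n'}$ (and then, by iterating (iii), the first column of $B_0$) to be a $k$-ancestor of the target column, contradicting an explicit chain of inequalities on the $k$-th entries of the first columns.

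There is a second, related gap in your reduction of (i)/(ii): ``the relevant strict inequalities are preserved under the uniform shift within matched ascending status'' assumes the matching. A candidate parent at position $p$ and the target at position $j$ can have different ascension statuses at row $k$, since membership in the set of common $k'$-ancestors for all $k'<k$ does not imply anything about row $k$; in the mixed case the entry comparison is not preserved, as the computation above shows. The paper closes this with a dichotomy that your sketch never establishes: if the target ascends at row $k$, then $R_0$ is a $k$-ancestor of the target and hence of every column in the candidate set, so all candidates ascend and the shift is uniform; if the target does not ascend, then none of its $k$-ancestors ascend (transitivity through $R_0$ would otherwise make the target ascend), their entries are all unchanged, and no new candidates can appear in $B_n$ because entries never decrease. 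Your treatment of (iii)'s two failure modes gestures at this (``must trace back through $R_0$'') but leaves it as a declaration of intent rather than an argument. In short, you have identified the right statements and the right arithmetic, but the actual content of (iv) and of the ascension dichotomy --- which is where the paper spends nearly all of its effort --- is missing or incorrect in your write-up.
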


\begin{proof}
We can prove this by induction on $k$. The proof is relatively straightforward, but tedious. The author recommends drawing the mentioned ancestry relations in order to see what is happening.

Assume all 5 statements hold for all $k'<k$.

For $(ii)$, fix $i$ and $j$. If $j=0$ then it is trivial, so we will only consider the case $j>0$. From the assumption, it follows that for all $k'<k$ and $i'<l_0$, the $i'$-th column in $B_0$ is a $k'$-ancestor of the $j$-th column in $B_0$ iff the $i'$-th column in $B_n$ is a $k'$-ancestor of the $j$-th column in $B_n$. Let $I$ be the set of $i'$ such that for all $k'<k$, the $i'$-th column in $B_0$ is a $k'$-ancestor of the $j$-th column in $B_0$.

Since for all $k'<k$, $k'$-ancestry is a total order on the columns with indices in $I$, the $k$-parent of each such column is simply the last such column before it with a smaller $k$-th element. The $k$-th element of the $j$-th column in $B_0$ ascends iff the first column in $B_0$ is in $I$ and is a $k$-ancestor of the $j$-th column in $B_0$, which is equivalent to the $k$-th element of the first column in $B_0$ being smaller than the $k$-th element of all columns between it and the $j$-th column in $B_0$, so it is also a $k$-ancestor of all other columns with indices in $I$. This means that either the $k$-th elements of all columns in $B_0$ with indices in $I$ ascend or the $k$-th element of the $j$-th column in $B_0$ doesn't ascend.

In the first case, the differences between the columns in $B_n$ with indices in $I$ are the same as in $B_0$, and since $k'$-ancestry relations between them are also the same as in $B_0$ for $k'<k$, $k$-ancestry must be the same too, because everything it depends on is the same. In the second case, since the $j$-th column doesn't ascend, in $B_n$, there trivially cannot be any $k$-ancestors of the $j$-th column that aren't copies of $k$-ancestors of the $j$-th column in $B_0$. Since this possibility requires that the first column in $B_0$ is not a $k$-ancestor of the $j$-th column, it is also not a $k$-ancestor of any $k$-ancestor of the $j$-th column, thus the $k$-th elements of the $k$-ancestors of the $j$-th column also don't ascend, and therefore the differences between them are the same, implying that the $k$-ancestry relations are preserved. Either way, $(ii)$ holds for $k$.

The above can trivially be extended to include the next copy of the first column in $B_0$, and then since the $k$-th element of the first column in $B_1$ is easily seen to be the same as $C$ as long as $k<m_0$, $(iii)$ holds for $k$.

Then to prove $(iv)$, we first observe that if for some $k'<k$, the $k'$-parent of the $i$-th column in $B_n$ is in $G$, then all of its $k'$-ancestors are in $G$, and its $k$-parent must be one of its $k'$-ancestors so it is also in $G$. So we're left with the case that for all $k'<k$, the $k'$-parent of the $i$-th column in $B_n$ is in $B_n$.

If the first column of $B_n$ is a $k'$-ancestor of the $i$-th column in $B_n$ for all $k'<k$, and yet its $k$-parent is not in $B_n$ or $G$, then the first column in $B_n$ is not a $k$-ancestor of the $i$-th column in $B_n$. Therefore from $(ii)$ for $k$, which we have already proven, we get that the $k$-parent of the $i$-th column in $B_0$ is not in $B_0$ (therefore it is in $G$), which also implies that the $k$-th element of the $i$-th column in $B_0$ does not ascend in the expansion of $A$, so it is equal to the $k$-th element of the $i$-th column in $B_n$. But from $(ii)$ for all $k'<k$ and the fact that the first column in $B_n$ is a $k'$-ancestor of the $i$-th column in $B_n$ for all $k'<k$, we get that the first column in $B_0$ is a $k'$-ancestor of the $i$-th column in $B_0$ for all $k'<k$.

This, together with its $k$-parent being in $G$, means that for all columns in $B_0$ that are $k'$-ancestors of the $i$-th column in $B_0$ for all $k'<k$, their $k$-th element is at least as large as the $k$-th element of the $i$-th column in $B_0$, and therefore at least as large as the $k$-th element of the $i$-th column in $B_n$. This includes the first column in $B_0$, and since the $k$-th element of the first column in $B_n$ is by definition at least as large as the $k$-th element of the first column in $B_0$, which is at least as large as the $k$-th element of the $i$-th column in $B_n$, which is by definition strictly larger than the $k$-th element of the $k$-parent of the $i$-th column in $B_n$, we get that the $k$-th element of the first column in $B_n$ is strictly larger than the $k$-th element of the $k$-parent of the $i$-th column in $B_n$. With that, and due to the facts that $k'$-ancestry is a total order on the set of $k'$-ancestors of each column for each $k'$, and that both the first column in $B_n$ and the $k$-parent of the $i$-th column in $B_n$ are $k'$-ancestors of the $i$-th column in $B_n$ for every $k'<k$, and the latter is before the former, we get that the $k$-parent of the $i$-th column in $B_n$ is also a $k$-ancestor of the first column in $B_n$.

If $k\ge m_0$ (using variable names from Definition \ref{bmsdef}), then this is already a contradiction, because the $m_0$-parent of the first column in $B_n$ is easily seen to be in $G$. Otherwise, let $n'<n$ be the natural number such that the $k$-parent of the $i$-th column in $B_n$ is in $B_{n'}$. From repeated applications of $(iii)$ for $k$, which we have already proven, we get that the first column in $B_{n'}$ is a $k$-ancestor of the first column in $B_n$, and therefore by $k$-ancestry being a total order on the set of $k$-ancestors of the first column in $B_n$, we get that the first column in $B_{n'}$ is a $k$-ancestor of the $k$-parent of the $i$-th column in $B_n$, and thus is also a $k$-ancestor of the $i$-th column in $B_n$. This, however, by more repeated applications of $(iii)$, implies that the first column in $B_0$ is a $k$-ancestor of the $i$-th column in $B_n$, which is in contradiction with the fact that the $k$-th element of the first column in $B_0$ is at least as large as the $k$-th element of the $i$-th column in $B_n$.

Now, for $(iv)$, we're left with the case that for some $k'<k$, the first column in $B_n$ is not a $k'$-ancestor of the $i$-th column in $B_n$. However, if we choose a specific such $k'<k$, then by $(iv)$ for $k'$ we get that the $k'$-parent of every $k'$-ancestor in $B_n$ of the $i$-th column in $B_n$ is either in $B_n$ or in $G$, from which it follows that all $k'$-ancestors of the $i$-th column in $B_n$ are either in $B_n$ or in $G$, and that includes the $k$-parent of the $i$-th column in $B_n$, proving $(iv)$ for $k$.

With $(iv)$ proven for $k$, the proof of $(ii)$ and $(iii)$ for $k$ can also be easily modified for relations between $G$ and $B_0$ and between $G$ and $B_n$, with all nontrivialities accounted for by $(iv)$ for $k$: either the $k$-th element of the $j$-th column in $B_0$ ascends and the $j$-th column in $B_n$ trivially has the first column in $B_0$ as a $k$-ancestor, thus the $k$-ancestors in $G$ are simply the $k$-ancestors of that (by totality of $k$-ancestry on the set of $k$-ancestors of the $j$-th column in $B_n$), or the $k$-th element of the $j$-th column in $B_0$ doesn't ascend and $B_n$'s copy $C_n$ of the ($j$-th column in $B_0$)'s first non-strict $k$-ancestor $C_0$ in $B_0$ is easily seen to have the same $k$-parent as $C_0$, because the $k$-parents of $C_0$ and $C_n$ are both in $G$, the $k$-th elements of $C_0$ and $C_n$ are equal, and the sets of $k'$-ancestors of $C_0$ and of $C_n$ are the same for every $k'<k$ by $(i)$ for $k'$, thus the $k$-ancestors in $G$ of both $C_0$ and $C_n$ are that $k$-parent and its $k$-ancestors. Therefore $(i)$ also holds for $k$.

Finally, $(v)$ can be proven for $k$ by simply letting $\{n_2,n_3\}=\{n_1,n_1+1\}$ (the two options together give the proofs of both directions of $(v)$), and noticing that if the $j$-th column in $B_{n_2}$ has a $k$-ancestor in $B_{n_0}$, then the first column in $B_{n_2}$ must also be its $k$-ancestor (similarly to the reasoning near the end of the previous paragraph - using $(iv)$ for the last $k$-ancestor in $B_{n_2}$ of the $j$-th column in $B_{n_2}$), and therefore by totality of $k$-ancestry on the set of $k$-ancestors of the $j$-th column in $B_{n_2}$, the $i$-th column in $B_{n_0}$ is a $k$-ancestor of the first column in $B_{n_2}$. Then if $k\ge m_0$, we get a contradiction, because the $k$-ancestors of the $j$-th column in $B_{n_2}$ are all in $B_{n_2}$ or $G$, as we've already proven, so it must be that $k<m_0$. In that case, by application of $(iii)$ and either (depending on $n_2-n_1$) another application of the totality of $k$-ancestry on the set of $k$-ancestors of the $j$-th column in $B_{n_2}$ or an application of transitivity of $k$-ancestry, we get that the $i$-th column in $B_{n_0}$ is also a $k$-ancestor of the first column in $B_{n_3}$, and finally by an application of $(ii)$ for $k$, we get that the first column in $B_{n_3}$ is a $k$-ancestor of the $j$-th column in $B_{n_3}$, so by transitivity of $k$-ancestry, the $i$-th column in $B_{n_0}$ is a $k$-ancestor of the $j$-th column in $B_{n_3}$, which concludes the proof of $(v)$ for $k$.

By induction, all 5 statements in the lemma always hold.
\end{proof}

We will abbreviate $\langle L_\alpha,\in\rangle\preceq_{\Sigma_{n+1}}\langle L_\beta,\in\rangle$ as $\alpha\le_n\beta$, and similarly for the strict versions of these relations. Here, $L_\alpha$ is the $\alpha$-th level of the constructible hierarchy, and $M\preceq_{\Sigma_n}N$ means that $M$ is a $\Sigma_n$-elementary substructure of $N$.

Let $\sigma$ be the smallest ordinal $\alpha$ such that there exists an ordinal $\beta$ with $\forall n\in\mathbb{N}(\alpha<_n\beta)$.

\begin{lemma}
\label{stbrflprop}
For all $\alpha,\beta\in\sigma$ and $n\in\mathbb{N}$, if $\omega<\alpha<_n\beta$, then for all finite $X,Y\subseteq Ord$ such that $\gamma<\alpha\le\delta<\beta$ for all $\gamma\in X$ and $\delta\in Y$, there exists a finite $Y'\subseteq Ord$ and a bijection $f: Y\to Y'$ such that for all $\gamma\in X$, all $\delta_0,\delta_1\in Y$, all $k\in\mathbb{N}$ and all $m<n$:
\vspace{-8pt}
\begin{itemize}
    \setlength\itemsep{-4pt}
    \item $\gamma<f(\delta_0)<\alpha$
    \item $\gamma<_k\delta_0\Rightarrow \gamma<_kf(\delta_0)$
    \item $\delta_0<\delta_1\Rightarrow f(\delta_0)<f(\delta_1)$
    \item $\delta_0<_k\delta_1\Rightarrow f(\delta_0)<_kf(\delta_1)$
    \item $\delta_0<_m\beta\Rightarrow f(\delta_0)<_m\alpha$
\end{itemize}
\vspace{-8pt}
\end{lemma}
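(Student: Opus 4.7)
My strategy is a reflection argument: encode the preservation data required of $f$ as a finite combinatorial type $T$ of $Y$ over $X$ relative to $\beta$, note that $Y$ itself witnesses in $L_\beta$ the existence of a realization of $T$, and then reflect down via $L_\alpha\preceq_{\Sigma_{n+1}}L_\beta$ to obtain a realization $Y'\subseteq L_\alpha$ of the analogous type relative to $\alpha$.

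First I would enumerate $Y=\{\delta_1<\cdots<\delta_r\}$ and define $T$ as the finite record of which $\delta_i<_k\delta_j$, which $\gamma<_k\delta_i$ for $\gamma\in X$, and which $\delta_i<_m\beta$ for $m<n$ hold. The finiteness of $T$ requires that $\{k:\gamma<_k\delta\}$ be finite for $\gamma<\delta<\sigma$; this follows from the minimality of $\sigma$, because $<_{k+1}$ implies $<_k$ (so the set is an initial segment of $\mathbb{N}$), and if it were all of $\mathbb{N}$ then $\delta$ would witness that $\gamma$ itself satisfies the defining property of $\sigma$, contradicting $\gamma<\sigma$.

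Next I would write a formula $\varphi_T(X)$ asserting that there exist $\eta_1<\cdots<\eta_r$ in the ambient universe, above $\sup X$, whose pairwise $<_k$-relations and $<_k$-relations with elements of $X$ match $T$, and such that for each $(i,m)\in T$ registering $\delta_i<_m\beta$, the ordinal $\eta_i$ is $\Sigma_{m+1}$-stable in the ambient universe. The ``ambient universe'' convention lets the clause originally about $\beta$ be re-interpreted as being about $\alpha$ after reflection. In $L_\beta$ the formula is witnessed by $Y$ itself; if $\varphi_T$ has complexity $\Sigma_{n+1}$, then by $\Sigma_{n+1}$-elementarity it also holds in $L_\alpha$, yielding the desired $Y'$. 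Setting $f(\delta_i)=\eta_i$, the five preservation properties follow directly from the matching of types.

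The main obstacle is arranging $\varphi_T$ to actually have complexity $\Sigma_{n+1}$. The clauses $\eta_i<_m(\text{universe})$ for $m<n$ fit via the standard definability of $\Sigma_{m+1}$-satisfaction inside $L_\xi$, but the internal clauses $\eta_i<_k\eta_j$ for $k$ much greater than $n$ could naively require complexity well beyond $\Sigma_{n+1}$. I plan to address this by inducting on the pair $(n,|Y|)$: at each step, only the outermost stability clauses (for $m<n$) are reflected via $\Sigma_{n+1}$-elementarity, while the inner $<_k$-clauses are reduced by recursively invoking the lemma at smaller $n$ on a reflected subproblem in which the relevant pair $(\eta_i,\eta_j)$ has already replaced $(\delta_i,\delta_j)$. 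This bootstrapping, which parallels standard manipulations in the ordinal analysis of stable ordinals, is the technical core of the argument.
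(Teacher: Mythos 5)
Your overall skeleton --- record the finitely many true instances of $<$, $<_k$ (among $Y$ and between $X$ and $Y$) and $<_m\beta$ as a finite type, observe the type is finite because all ordinals involved lie below $\sigma$, express realizability of the type by a single existential formula witnessed by $Y$ in $L_\beta$, and pull it down to $L_\alpha$ by $\Sigma_{n+1}$-elementarity --- is exactly the paper's proof, and your finiteness argument (the set $\{k:\gamma<_k\delta\}$ is an initial segment of $\mathbb{N}$ which, if infinite, would make $\gamma$ a witness contradicting the minimality of $\sigma$) is correct. The gap is in what you call the technical core. The internal clauses $\eta_i<_k\eta_j$ do \emph{not} require complexity beyond $\Sigma_1$, let alone $\Sigma_{n+1}$: for $<_k$ between two \emph{ordinals} (as opposed to stability in the whole universe), $\eta<_k\xi$ says that the \emph{set} $L_\xi$ exists, is a level of $L$ whose ordinals are exactly the elements of $\xi$, and that the $\Pi_{k+1}$ formula expressing ``$\eta$ is $\Sigma_{k+1}$-stable in the universe'' holds \emph{relativized to} $L_\xi$. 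The relativization of any first-order formula to a set has only bounded quantifiers, hence is $\Sigma_0$ in the parameter $L_\xi$; so each such clause is $\Sigma_1$ --- the sole unbounded quantifier is the one producing $L_\xi$ --- uniformly in $k$. The only clauses whose complexity genuinely grows with the subscript are $\delta_i<_m\beta$, which inside $L_\beta$ become statements about the whole model and are $\Pi_{m+1}$ (by Kranakis' characterization); this is precisely where the hypothesis $m<n$ is needed, making the whole conjunction $\Sigma_{n+1}$ so that it reflects. With this observation your ``main obstacle'' evaporates, and no induction on $(n,|Y|)$ is needed.

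Your proposed workaround, by contrast, does not work as stated. The witnesses $\eta_i$ produced by reflecting a formula satisfy only what that formula asserts about them; if the high-$k$ internal clauses are omitted from the formula, nothing forces them to hold of the witnesses afterwards, and recursively invoking the lemma at smaller $n$ cannot repair this: the lemma maps ordinals lying in $[\alpha,\beta)$ to ordinals below $\alpha$, it does not establish new $<_k$-relations between ordinals that have already been fixed below $\alpha$. Worse, the recursion runs in the wrong direction --- the problematic clauses have $k$ possibly much larger than $n$, and descending to smaller $n$ only weakens the available elementarity. So the bootstrapping would have to be replaced, not refined; the uniform $\Sigma_1$-expressibility of $<_k$ between set-sized ordinals is the missing idea.
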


We can prove this by constructing a $\Sigma_{n+1}$ formula that, when interpreted in $L_\beta$, asserts all the true instances of the statements on the left side of the implications, and when interpreted in $L_\alpha$, asserts the corresponding instances of the statements on the right side of the implications. One small issue is the first assertion, which is unconditional. However, the $f(\delta_0)<\alpha$ part is simply asserting that $f(\delta_0)$ exists in $L_\alpha$, which will be done by existentially quantifying the variable, and since $\gamma<\alpha\le\delta_0$ is necessarily true, $\gamma<f(\delta_0)$ is equivalent to $\gamma<\delta_0\Rightarrow\gamma<f(\delta_0)$, which is a conditional statement.

\begin{proof}
We construct a formula with parameters $\gamma_0,\gamma_1,...,\gamma_{|X|-1}$, which are all the elements of $X$. Since they're ordinals smaller than $\alpha$, they are in $L_\alpha$, therefore we can use them as parameters in a formula that we want to reflect using the stability relation between $\alpha$ and $\beta$.

Let $\varphi_0(\eta,\xi)$ be a formula asserting $\eta<\xi$. Let $\varphi_1(\eta,\xi,k)$ be a formula asserting $\eta<_k\xi$. Let $\varphi_2(\eta,k)$ be a formula asserting $\eta<_kOrd$, i.e. $\langle L_\eta,\in\rangle\prec_{\Sigma_{k+1}}\langle L,\in\rangle$.

$\varphi_0$ is clearly $\Sigma_0$, as it is simply the atomic formula $\eta\in\xi$. This means it is $\Sigma_{n+1}$. $\varphi_1$ only needs to assert the existence of $L_\xi$, the defining characteristics of it (specifically that it is a level of $L$, which is simply $V=L$ relativized to it, and that the ordinals in it are precisely the elements of $\xi$, which is trivially $\Sigma_0$), and then it needs to assert that $\varphi_2(\eta,k)$ relativized to $L_\xi$ holds. The relativization of a first-order formula to a set is trivially always $\Sigma_0$. Assuming $\varphi_2$ is first-order, the only unbounded quantifier in $\varphi_1$ is the one existentially quantifying $L_\xi$. Then $\varphi_1$ is $\Sigma_1$, which means it's also $\Sigma_{n+1}$. Finally, $\varphi_2(\eta,k)$ is $\Pi_{k+1}$, as shown in \cite{kranakis} (Theorem 1.8), which means it is $\Sigma_{k+2}$, and therefore first-order. In all non-relativized uses of $\varphi_2$, we will require $k<n$, which means $k+2\le n+1$, thus it is $\Sigma_{n+1}$.

$X$ and $Y$ are finite, and all of their elements are smaller than $\sigma$ so for each $\eta,\xi\in X\cup Y$, there are only finitely many $k$ for which $\varphi_1(\eta,\xi,k)$ is true. Then there are finitely many instances of $\varphi_0(\gamma_i,\delta_j)$, $\varphi_1(\gamma_i,\delta_j,k)$, $\varphi_0(\delta_i,\delta_j)$, $\varphi_1(\delta_i,\delta_j,k)$ and $\varphi_2(\delta_i,m)$ with $k\in\mathbb{N}$ and $m<n$, which are true when each $\delta_i$ is interpreted as the $i$-th element of $Y$. So their conjunction $\varphi$ is a conjunction of finitely many $\Sigma_{n+1}$ formulae, therefore it is itself a $\Sigma_{n+1}$ formula. Then we only need a $\Sigma_{n+1}$ formula $\psi$ asserting that all the $\delta_i$ are ordinals, which is trivial.

Now, the formula $\psi\land\varphi$ is $\Sigma_{n+1}$, therefore the formula $\exists\delta_0,\delta_1,...,\delta_{|Y|-1}(\psi\land\varphi)$ is also $\Sigma_{n+1}$. In $L_\beta$, the witnesses of that existential quantifier are the elements of $Y$, therefore the formula is true in $L_\beta$. Then by $\alpha<_n\beta$, it must be true in $L_\alpha$, and since it encodes all the relations between elements of $X$, elements of $Y$ and $\beta$ that need to be reflected to relations between elements of $X$, elements of $Y'$ and $\alpha$, the witnesses of that formula in $L_\alpha$ form a set $Y'$ that, together with the unique order isomorphism $f: Y\to Y'$, satisfies the conditions in the lemma.
\end{proof}

Note that this reflection is similar to reflection in Patterns of Resemblance, and those could be used too. However, the author is not as experienced in working with Patterns of Resemblance, so it was easier to use stability.

\begin{theorem}
$BMS$ is well-ordered.
\end{theorem}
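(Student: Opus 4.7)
The plan is to define a strictly order-preserving map $F\colon BMS\to\sigma$ and conclude well-orderedness from well-orderedness of the ordinals: any infinite descending chain in $BMS$ would induce one in $\sigma$. Using the characterization $A'<A$ iff $A'=A[n_0]\cdots[n_m]$ noted at the start of Section 2, it suffices to construct $F$ so that $F(A[n])<F(A)$ for every nonempty $A$ and every $n\in\mathbb{N}$.

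The core idea is that columns of an array should correspond to ordinals, and ancestry should correspond to stability: for each array $A$ with columns $C_0,\ldots,C_{l-1}$, I would produce ordinals $\alpha_0<\alpha_1<\cdots<\alpha_{l-1}<\sigma$ such that, for all $i<j$ and $k\in\mathbb{N}$,
\[\alpha_i<_k\alpha_j \iff C_i \text{ is a } k\text{-ancestor of } C_j,\]
and set $F(A)=\alpha_{l-1}+1$ (with $F(\emptyset)=0$). Existence of such an assignment is proved by induction on $l$: given $\alpha_0,\ldots,\alpha_{l-1}$ and a new column $C_l$ with prescribed ancestry to the previous columns, I would work inside a sufficiently stable $\beta>\alpha_{l-1}$ with $\alpha_{l-1}<_n\beta$ for large $n$ (possible because $\alpha_{l-1}<\sigma$), pick an ordinal $\delta\in(\alpha_{l-1},\beta)$ witnessing the desired stability relations with each $\alpha_i$, and apply Lemma \ref{stbrflprop} to reflect $\delta$ down to the required $\alpha_l<\sigma$ preserving those relations.

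To show $F(A[n])<F(A)$, write $A=G+B_0+(C)$ and $A[n]=G+B_0+B_1+\cdots+B_n$ as in Definition \ref{bmsdef}, reuse the ordinals assigned to $G+B_0$ in $A$, and construct ordinals for $B_1,\ldots,B_n$ strictly below the ordinal assigned to $C$ in $A$. Lemma \ref{bpiso} is what makes this feasible: (i)--(ii) say ancestry within and between $G$ and $B_0$ is unchanged across the copies, (iii) says the first column of $B_1$ inherits the $k$-ancestry of $C$ for $k<m_0$, and (iv)--(v) say all cross-block ancestries factor through the first columns of the blocks. Accordingly, I would iteratively reflect $B_0$'s ordinal block down below the ordinal of $C$ via Lemma \ref{stbrflprop}, producing $n$ new blocks whose ordinals all lie below that of $C$. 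The last column of $A[n]$ is then assigned an ordinal strictly below the ordinal of $C$, which is at most $\alpha_{l-1}<F(A)$, so $F(A[n])<F(A)$.

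The main obstacle is the bookkeeping required to verify that Lemma \ref{stbrflprop}, which preserves only finitely many stability relations (those involving a fixed finite set $Y$), can be applied consistently throughout the construction so that \emph{every} relation forced by Lemma \ref{bpiso} is witnessed. Each application of Lemma \ref{stbrflprop} must include in $Y$ all previously used ordinals whose relations to the newly reflected ordinals matter for the overall ancestry structure, and one must check that the reflected ordinals satisfy the required relations with one another and across blocks. Once this bookkeeping is handled, the proof reduces to combining Lemma \ref{bpiso}, Lemma \ref{stbrflprop}, and the well-orderedness of the ordinals.
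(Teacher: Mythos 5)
Your high-level plan coincides with the paper's (assign ordinals to columns so that ancestry is witnessed by stability, use Lemma \ref{stbrflprop} for the expansion step, Lemma \ref{bpiso} to verify the relations, and finish by well-foundedness of $Ord$), but two of your steps contain genuine errors, not just unfinished bookkeeping. First, your existence argument for the column-ordinal assignment is broken. You justify finding $\beta$ with $\alpha_{l-1}<_n\beta$ ``because $\alpha_{l-1}<\sigma$,'' but being below $\sigma$ confers no stability at all: for instance $\omega+1<\sigma$, yet $L_{\omega+1}$ is not even a $\Sigma_1$-elementary substructure of any $L_\beta$ (the $\Sigma_1$ statement ``there is an ordinal containing $\omega$'' is true in $L_\beta$ and false in $L_{\omega+1}$), so no successor ordinal is $<_0$ anything. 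Moreover, your inductive invariant records only the relations among $\alpha_0,\dots,\alpha_{l-1}$, which is too weak to extend: for $((0,0),(1,1),(2,2))$ the third step needs $\alpha_1<_1\alpha_2$, but nothing in your hypothesis for the first two columns prevents $\alpha_1$ from having been chosen with no upward stability whatsoever. Your ``iff'' requirement compounds this: Lemma \ref{stbrflprop} preserves relations in one direction only, so reflected ordinals can acquire extra stability relations with no ancestry counterpart, destroying the iff. The paper sidesteps all of this: its ``stable representation'' demands only the forward implication (ancestry implies stability), and it never assigns ordinals to arbitrary arrays column-by-column; representations are built only for elements of $BMS$, by induction along the closure structure starting from $X_0$ and propagating through expansion.

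Second, and most substantively, your expansion step has the reflection pointing the wrong way. Lemma \ref{stbrflprop} reflects \emph{downward} only: it produces $Y'$ lying strictly below $\alpha\le\min Y$ (and above $X$). So you cannot ``reuse the ordinals assigned to $G+B_0$ in $A$'' and then manufacture blocks for $B_1,\dots,B_n$ sitting \emph{above} $B_0$'s ordinals and below the ordinal of $C$: the ordinal of $C$ is the largest one in play, there is nothing above it to reflect down into that gap, and the lemma can never place a copy of a block of ordinals above the block itself. The paper's construction resolves this with an inversion that your proposal lacks: in the representation $f_{n+1}$ of $A[n+1]$, it is the \emph{last} copy $B_{n+1}$ that receives the original ordinals $Y$ (those $f$ gave to $B_0$), while the block previously assigned to $B_n$ is replaced by the reflected copy $Y'$, squeezed in below $\alpha=\min Y$ and above everything earlier; here $\alpha<_{m_0}\beta$ is supplied by the stable representation of $A$ itself, since the first column of $B_0$ is the $m_0$-parent of $C$. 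Every application of the lemma is then a genuine downward reflection, and Lemma \ref{bpiso} certifies that the resulting map is again a stable representation. This flip is the crux of the expansion step; without it your construction cannot produce even the first new block.
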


\begin{proof}
We will define a function $o: BMS\to Ord$ in the following way. Consider an array $A$ with length $n$. A stable representation of $A$ is a function $f: n\to Ord$ such that for all $i,j<n$, $i<j\Rightarrow f(i)<f(j)$ and for all $m$, if the $i$-th column of $A$ is an $m$-ancestor of the $j$-th column of $A$, then $f(i)<_mf(j)$. Let $o(A)$ be the minimal $\alpha\in Ord$ such that for some stable representation $f$ of $A$, all outputs of $f$ are smaller than $\alpha$.

This proof is similar to the proof of Lemma \ref{totality} - we prove by induction on the number of expansions needed to reach an array, that $o$ is defined and order-preserving on all of $BMS$, by starting from $X_0$ and proving that if it holds for some $Z$, then it holds for $Z\cup\{A[n] : A\in Z\land n\in\mathbb{N}\}$, and using the fact that every pair $A,A'$ of arrays is reached after finitely many applications of this induction step.

Of course, $o(A)$ is defined for $A\in X_0=\{((\underbrace{0,0,...,0,0}_n),(\underbrace{1,1,...,1,1}_n)) : n\in\mathbb{N}\}$, and it is easy to see that $o(((\underbrace{0,0,...,0,0}_n),(\underbrace{1,1,...,1,1}_n)))<o(((\underbrace{0,0,...,0,0}_m),\linebreak(\underbrace{1,1,...,1,1}_m)))$ iff $n<m$, thus $o$ is also order-preserving in this set.

Let $Z$ be a set of arrays, on which $o$ is order-preserving and defined for all of $Z$'s elements. Let $A\in Z$. If $A$ is empty, then trivially for all $n\in\mathbb{N}$, $o(A[n])=o(A)\le o(A)$, and thus $o(A[n])$ is also defined and order is preserved. Otherwise, let $f$ be a stable representation of $A$ whose outputs are all smaller than $o(A)$. We can then recursively define stable representations of $A[n]$ in the following way.

Let $l_n$ be the length of $A[n]$ for all $n\in\mathbb{N}$. A stable representation $f_0$ of $A[0]$ is simply $f$ restricted to $l_0$. Using variable names from the definition of $BMS$, this representation trivially maps indices (in $A[0]$) of columns in $B_0$ to the ordinals to which $f$ maps indices (in $A$) of columns in $B_0$.

Let $f_n$ be a stable representation of $A[n]$ that maps indices (in $A[n]$) of columns in $B_n$ to the ordinals to which $f$ maps indices (in $A$) of columns in $B_0$. Then using the reflection property from Lemma \ref{stbrflprop} with $\alpha$ being the ordinal to which $f_n$ maps the index of the first column in $B_n$, $\beta$ being the ordinal to which $f$ maps the last column of $A$, $X$ being the set of ordinals to which $f_n$ maps indices of columns before $B_n$, and $Y$ being the set of ordinals to which $f_n$ maps indices of columns in $B_n$ (or to which $f$ maps indices of columns in $B_0$), we get a set $Y'$ of ordinals due to $\alpha<_{m_0}\beta$. We can then define $f_{n+1}$ by making it the same as $f_n$ for indices of columns before $B_n$, mapping indices of columns in $B_n$ to the elements of $Y'$, and mapping indices (in $A[n+1]$) of columns in $B_{n+1}$ to the elements of $Y$.

It follows from Lemma \ref{bpiso} that $f_{n+1}$ is a stable representation of $A[n+1]$. Then it's trivially a stable representation of $A[n+1]$ that maps indices of columns in $B_{n+1}$ to the ordinals to which $f$ maps indices of columns in $B_0$, therefore by induction, for all $m\in\mathbb{N}$, there is a stable representation of $A[m]$ that maps indices of columns in $B_m$ to the ordinals to which $f$ maps indices of columns in $B_0$. Since all these ordinals are smaller than $\beta$, $o(A[m])$ is defined and is at most $\beta$, and since $\beta$ is an output of $f$, it is smaller than $o(A)$, so $o(A[m])<o(A)$, which means $o$ is defined and order-preserving (due to the order being originally defined only by comparing an array with its expansions) on $Z\cup\{A[m] : A\in Z\land m\in\mathbb{N}\}$.

Now, similarly to the proof of Lemma \ref{totality}, with $X_0=\{((\underbrace{0,0,...,0,0}_n),\linebreak(\underbrace{1,1,...,1,1}_n)) : n\in\mathbb{N}\}$, we conclude that $o$ is defined and order-preserving on $X_0\cup\{A[n_0] : A\in X_0\land n_0\in\mathbb{N}\}\cup\{A[n_0][n_1] : A\in X_0\land n_0,n_1\in\mathbb{N}\}\cup...\cup\{A[n_0][n_1]...[n_m] : A\in X_0\land n_0,n_1,...,n_m\in\mathbb{N}\}$ for each $m\in\mathbb{N}$, and since all $A,A'\in BMS$ are also in this set for some $m$, $o$ is defined for them their order is preserved by $o$, so $o$ is defined and order-preserving on all of $BMS$.

Then if $BMS$ was not well-ordered, there would be an infinite descending sequence in $BMS$, which would get mapped to an infinite descending sequence of ordinals by $o$, and that cannot exist by the definition of ordinals. Therefore $BMS$ is well-ordered.
\end{proof}

\section{Future research}
We hope to use $BMS$ in ordinal analysis, first using it to rewrite analyses of theories that have already been analyzed by other means, and then analyzing even stronger theories, ideally up to full second-order arithmetic if the order type of $BMS$ is large enough for that.

Once this approach proves viable, we also plan to continue proving the well-orderedness of similar notation systems with larger order types, such as $Y$ sequence \cite{naruyoko1} and its extension $\omega-Y$ sequence \cite{naruyoko2}.

Another challenge that is relevant is the task to find a "self-contained" proof of well-orderedness of $BMS$ (that is, a proof using only concepts that are directly related to $BMS$, which excludes stability and ordinal collapsing functions), as this would simplify the translation of the proof to theories that only deal with basic structures, such as third-order arithmetics.

\section{Acknowledgements}
I would like to express my deepest gratitude to the discord user C7X (also known as Convindix) for proofreading this paper, as well as introducing me to the concept of stability years ago, which led to this paper's very existence.

I also want to thank the googology and apeirology community for the doubt that motivated me to finish this paper.

\newpage
\section{References}
\printbibliography
\end{document}